\documentclass[a4paper,11pt]{article}
\usepackage{amsmath}
\usepackage{amsthm}	 
\usepackage{amssymb}
\usepackage{verbatim}
\usepackage{subfigure}
\usepackage[section]{placeins}
\usepackage{texdraw}
\usepackage{color}
\usepackage{geometry}
\usepackage{graphicx}
\geometry{a4paper,left=1in,right=1in,top=1in,bottom=1in}

\setcounter{topnumber}{2}
\setcounter{bottomnumber}{2}
\setcounter{totalnumber}{4}

\newtheorem {theorem}{Theorem }[section]

\newtheorem {lemma}{Lemma }[section]
\newtheorem {definition}{Definition}[section]
\newtheorem {remark}{Remark}[section]

\def\L{{\mathbf L}}

\title{ Existence of optimal boundary control for the Navier-Stokes equations with mixed boundary conditions}
\author{Telma Guerra \thanks{ESTB-IPS/ CMA, FCT-UNL, Portugal ({\tt telma.guerra@estbarreiro.ips.pt}). }, Ad\'elia Sequeira \thanks{CEMAT, IST-ULisboa ({\tt adelia.sequeira@math.ist.utl.pt})} and Jorge Tiago \thanks{CEMAT, IST-ULisboa ({\tt jftiago@math.ist.utl.pt}) This work has been partially supported  by FCT (Portugal) through the
Research Centers CMA/FCT/UNL, CEMAT-IST,  the grant SFRH/BPD/66638/2009 and the projects PEst-OE/MAT/UI0297/2014 and EXCL/MAT-NAN/0114/2012.}}

\date{}

\def\RR{{I~\hspace{-1.45ex}R} }
\def\NN{{I~\hspace{-1.45ex}N} }

\newcommand{\mbf}{\mathbf}

\DeclareMathOperator{\E}{E}  
\DeclareMathOperator{\sol}{S} 
\DeclareMathOperator{\pro}{P}  
\DeclareMathOperator{\C}{C}  

\begin{document}
\maketitle

\begin{abstract}
Variational approaches have been used successfully as a strategy  to take advantage from real data measurements. In several applications, this approach gives a means to increase the accuracy of numerical simulations. In the particular case of fluid dynamics, it leads to optimal control problems with non standard cost functionals which, when constraint to the Navier-Stokes equations, require a non-standard theoretical frame to ensure the existence  of solution. In this work, we prove the existence of solution for a class of such type of optimal control problems. Before doing that, we ensure the existence and uniqueness of solution for the 3D stationary Navier-Stokes equations, with mixed-boundary conditions, a particular type of boundary conditions very common in applications to biomedical problems.
\end{abstract}

\textbf{Keywords} Boundary control, optimal control, steady Navier-Stokes equations, mixed boundary conditions.

\textbf{AMS} 49J20, 76D03, 76D05.

\pagestyle{myheadings}
\thispagestyle{plain}

\section{Introduction}
 Optimal control problems associated to fluid dynamics have been studied by several authors, during the last decades, motivated by the important applications of such type of problems to the industry. In a natural way, most of the first works were devoted to the case of distributed control as this is easier to handle. However, the most challenging problems in applications such as automobile or airplane design, and more recently, in bypass design or boundary reconstruction in medical applications,  are modeled by problems where  the  control is assumed to act on part of the boundary. Actually, boundary control problems are usually harder to deal, specially with respect to optimality conditions, since higher regularity for the solutions is often required. The list of works  on the subject is long, and  here we  only mention a few references  \cite{AT90}, \cite{GHS91}, \cite{FS92}, \cite{GM00}, \cite{DK05}, \cite{DT07} and \cite{DY09}.

In this work, and having in mind applications in biomedicine, we will consider the steady Navier-Stokes equations with mixed boundary conditions

\begin{equation}\label{navierstokes}
\left\{ \begin{array}{ll}-\nu\Delta {u}+
	{{u}}\cdot \nabla {{u}} +\nabla p= {f}& \qquad
	      \mbox{in }  \Omega,\vspace{2mm} \\
             \nabla \cdot{{u}}=0&  \qquad \mbox{in }  \Omega,\vspace{2mm}\\
             \gamma{u}={g}& \qquad \mbox{on }  \Gamma_{in},\\
            \gamma{{u}}={0}& \qquad \mbox{on }  \Gamma_{wall},\\
             \nu\partial_n{{u}}-pn={0}& \qquad \mbox{on }  \Gamma_{out},
\end{array}\right.\end{equation}
where $\nu$ represents the  viscosity of the fluid (possibly divided by its constant density),  $f$ the vector force acting on the fluid and $g$ the function imposing the velocity profile on $\Gamma_{in}$. The unknowns are the velocity vector field $u$ and the pressure variable $p$. These equations  have been widely used to model and simulate the blood flow in the cardiovascular system (see, for instance, \cite{G08} and the references cited therein). In this type of applications it is often required to represent part of an artery as the computational (bounded) domain $\Omega$. In addition, for the numerical simulations, we impose homogeneous Dirichlet boundary conditions on the surface representing the vessel wall ($\Gamma_{wall}$) and Dirichlet non-homogeneous on the artificial boundary ($\Gamma_{in}$), which is used to truncate the vessel from the upstream region. Besides, on the surface limiting the domain, in the downstream direction ($\Gamma_{out}$), homogeneous Neumann  boundary conditions are imposed. In Figure~\ref{figdomain} we can see a longitudinal section of such a domain, where the deformation of $\Gamma_{wall}$ could represent the presence of a plaque of atherosclerosis. 

\begin{figure}[h]
\centering
\includegraphics[scale=.45]{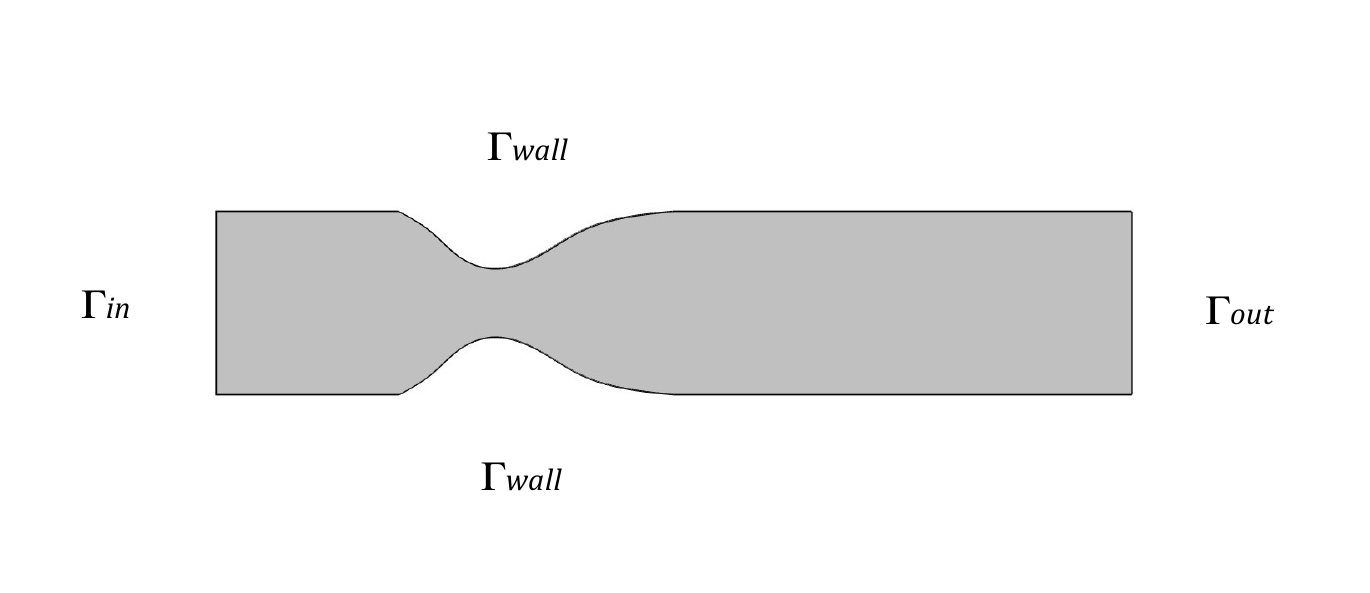}
 \centering
 \caption{\scriptsize{Representation of the domain $\Omega$}}
  \label{figdomain}
\end{figure}

When facing this and other type of pathologies of the cardiovascular system, it is important the evaluation of hemodynamical factors to predict, in a non invasive way, either the evolution of the disease, or the effect of possible therapies. This can be done by relying on the numerical simulations obtained in the domain under analysis. The main difficulty in this strategy lies in the lack of accuracy of the virtual simulations with respect to the real situation. In order to improve the accuracy and make the simulations sound enough, it is possible to use data from measurements of the blood velocity profile, obtained through medical imaging in some smaller parts of the vessel. This can be done through a variational approach, i.e., by setting an optimal control problem with a cost function (or a class of cost functions) of the type   
\begin{equation}\label{costfunctional}
J({u},{g})=\beta_1\int_{\Omega_{part}}|{{u}}-{{u}}_d|^2\,dx+\beta_2\int_{\Gamma_{in}}|{ g}|^2\,ds+\beta_3\int_{\Gamma_{in}}|{\nabla_s g}|^2\,ds,
\end{equation}
where ${{u}}_d$ represents the data available only on a part of the domain called $\Omega_{part}$. Note that, while fixing the weights $\beta_1$, $\beta_2$ and $\beta_3$, we determine whether the minimization of $J$ emphasizes more a good approximation of the velocity vector to $u_d$, a ``less expensive'' control $g$ (in terms of the $L^2$-norm), or a smoother control.
An example of $u_d$, measured in $\Omega_{part}$, could be the velocity vectors obtained in several cross sections of the vessel, as represented in Figure~\ref{figdata}.
\begin{figure}[h]
\centering
\includegraphics[scale=1]{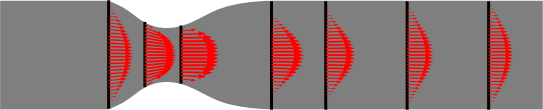}
 \centering
 \caption{\scriptsize{Representation of $u_d$ over $\Omega_{part}$}}
  \label{figdata}
\end{figure}

Solving the optimal control problem
\begin{equation}\label{controlproblem}
(P)\left\{
  \begin{array}{ll}
    \text{Minimize}\,\, J({u},{g})\\
    \text{subject to}\,\, \eqref{navierstokes}
  \end{array}
\right.
\end{equation}
will give us the means of making blood flow simulations more reliable, using known data. 

This strategy is not new, and has already been used as a proof of concept in \cite{GTS14} and \cite{TGS14}, where both the Navier-Stokes and the Generalized Navier-Stokes equations were considered to model the blood flow.
Even if it proved to be successful from the numerical point of view, problem $(P)$ has not yet been studied, at least up to the authors knowledge, not even with respect to the existence of solution. In fact, many authors have treated similar problems, considering the same type of cost functionals constrained to  the Navier-Stokes equations, but for the case where $\Omega_{part}=\Omega$ and without using mixed boundary conditions. In  \cite{DK05} and \cite{DY09} the case with only Dirichlet boundary conditions, and a similar cost functional, was treated. In \cite{GHS91} and \cite{M07} the authors considered $J$ as the cost functional, with $\Omega_{part}=\Omega$, but again they just dealt with Dirichlet boundary conditions. In \cite{FR09} the authors considered a more complex set of mixed boundary condition, but for a different cost functional.

Here we prove the existence of solution for problem $(P)$ regarded in the weak sense. We will make the distinction between different possibilities both for $\Omega_{part}$ and for the parameters $\beta_2$ and $\beta_3$. In order to do that, we will start by setting  the existence of a unique weak solution for the state equation (\ref{navierstokes}). The regularity of this solution remains an open problem and will not be treated here. It is important to deal with this issue, before addressing the natural following stages, namely the derivation of optimality conditions for problem $(P)$ and the numerical approximation. 

The organization of this paper reads as follows. In Section 2 we give some notation and results needed for this work. The Navier-Stokes equations with mixed boundary conditions are studied in Section 3. Finally, in Section 4, we prove the existence of solution for a class of optimal control problems.  

\section{Notation and some useful results}

We consider $\Omega\subset{R}^n$, with $n=2,3$, an open bounded subset with Lipschitz boundary.

The standard Sobolev spaces are denoted by 
$$ W^{k,p}(\Omega)=\left\{u\in \L^{p}(\Omega): \,\Vert u\Vert_{W^{k,p}}^p=\sum_{|\alpha|\leq k}\Vert D^{\alpha}u\Vert_{L^p}^p<\infty \right\},$$
 where $k\in \NN$ and $1<p<\infty$.  For $s\in\RR$, $W^{s,p}(\Omega)$ is defined by interpolation. The dual space of  $ W^{1,p}_0(\Omega)$ is denoted by  $ W^{-1,p'}(\Omega)$. We also use $H^s(\Omega)$ to represent the  Hilbert spaces $W^{s,2}(\Omega)$. For $\Gamma\subset\partial\Omega$ with positive measure we denote by $H^s(\Gamma)$, $s\geq\frac{1}{2}$, the image of the unique linear continuous trace operator  $$\gamma_{\Gamma}:H^{s+\frac{1}{2}}(\Omega)\to H^s (\Gamma),$$
such that $\gamma_{\Gamma}u=u_{|\Gamma}$ for all $u\in H^{s+\frac{1}{2}}(\Omega)\cap C^0(\bar{\Omega})$. In particular, for $s=0$, $H^0(\Gamma)$ is the subspace of $L^2(\Gamma)$ corresponding to the image of the continuous functions in $H^1(\Omega)$. The norm of $H^s(\Gamma)$ is defined similarly to the norm in $H^1(\Omega)$, except that the tangential derivatives on $\Gamma$ should be used (see, for instance, \cite{GHS91}).  
Whenever $Y$ is a space of functions $u:\Omega\to R$, we will use the boldface notation $ \mbf{Y}=Y\times Y\times Y$ for the corresponding space of vector valued functions.

We will also make use of the following Sobolev embedding result:
\begin{lemma}\label{sobolev}
  Let  $\Omega$ be a bounded set of class $C^1$. Assume that $p<n$ and $p^*=\frac{pn}{n-p}$. Then
\begin{description}
\item[i)] $W^{1,p}(\Omega)\subset L^q$, $\forall q\in [1,p^*[$ with compact embedding.
\item[ii)] $W^{1,p}(\Omega)\subset L^{p*}$, with continuous embedding.
\end{description}
\end{lemma}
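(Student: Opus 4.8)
The plan is to reduce both statements to the classical Sobolev inequality on the whole space $\R^n$ and then transfer everything to $\Omega$ by an extension argument, deriving the compactness in (i) from the continuous embedding in (ii) together with a translation estimate and interpolation. The core analytic ingredient is the Gagliardo--Nirenberg--Sobolev inequality: for every $v\in C^1_c(\R^n)$ one has $\|v\|_{L^{p^*}(\R^n)}\le C(n,p)\,\|\nabla v\|_{L^p(\R^n)}$. I would first establish the case $p=1$, $p^*=\frac{n}{n-1}$, by writing each coordinate bound $|v(x)|\le\int_\R|\partial_i v|\,dt_i$ from the fundamental theorem of calculus, multiplying the $n$ resulting inequalities raised to the power $\frac{1}{n-1}$, and integrating in $x_1,\dots,x_n$ one variable at a time while applying the generalized Hölder inequality at each step. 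The general case $1<p<n$ then follows by applying the $p=1$ inequality to $|v|^{\gamma}$ with $\gamma=\frac{p(n-1)}{n-p}$ and absorbing the extra factor by Hölder. A density argument extends the inequality from $C^1_c(\R^n)$ to all of $W^{1,p}(\R^n)$.

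Next I would transfer this to the bounded $C^1$ domain $\Omega$. Because $\partial\Omega$ is of class $C^1$, there is a bounded linear extension operator $E:W^{1,p}(\Omega)\to W^{1,p}(\R^n)$ with $\|Eu\|_{W^{1,p}(\R^n)}\le C\|u\|_{W^{1,p}(\Omega)}$ and, after multiplication by a fixed cut-off, with supports contained in a common bounded set. Applying the $\R^n$ inequality to $Eu$ and restricting back to $\Omega$ gives $\|u\|_{L^{p^*}(\Omega)}\le C\|u\|_{W^{1,p}(\Omega)}$, which is exactly the continuous embedding of part (ii). Since $\Omega$ is bounded, Hölder's inequality yields $L^{p^*}(\Omega)\hookrightarrow L^q(\Omega)$ continuously for every $q\le p^*$, so the continuous embedding into each $L^q$, $q\in[1,p^*]$, is immediate.

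For the compactness in part (i), I would use the Riesz--Fréchet--Kolmogorov criterion. Let $\mathcal F$ be bounded in $W^{1,p}(\Omega)$; after extension and cut-off it becomes a family bounded in $W^{1,p}(\R^n)$ and supported in a fixed compact set. The elementary translation estimate $\|\tau_h v-v\|_{L^p(\R^n)}\le|h|\,\|\nabla v\|_{L^p(\R^n)}$, together with the fixed compact support (which lets Hölder pass from $L^p$ to $L^1$ control on a set of finite measure), shows that translates of $\mathcal F$ tend to the identity uniformly in $L^1$, so the criterion gives relative compactness of $\mathcal F$ in $L^1(\Omega)$. To upgrade this to $L^q$ for $q<p^*$ I would interpolate: choosing $\theta\in(0,1]$ with $\frac1q=\frac{\theta}{1}+\frac{1-\theta}{p^*}$, the inequality $\|u\|_{L^q}\le\|u\|_{L^1}^{\theta}\,\|u\|_{L^{p^*}}^{1-\theta}$ combined with the uniform $L^{p^*}$ bound from part (ii) turns any $L^1$-convergent subsequence into an $L^q$-Cauchy one. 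This yields the compact embedding for all $q\in[1,p^*)$.

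The main obstacle, and the only genuinely nontrivial analytic step, is the Gagliardo--Nirenberg--Sobolev inequality on $\R^n$, specifically the iterated application of the generalized Hölder inequality that produces the sharp exponent $p^*$. The existence of the extension operator for $C^1$ domains and the Riesz--Fréchet--Kolmogorov compactness criterion are standard tools, and the passage from the critical exponent $p^*$ to the subcritical range is the routine interpolation step described above. In view of this, I would in practice simply invoke the classical Sobolev embedding and Rellich--Kondrachov theorems rather than reproduce the full argument.
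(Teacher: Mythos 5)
Your proposal is correct, and it coincides with the paper's treatment: the paper proves this lemma simply by citing the classical results in Br\'ezis (Corollary IX.14 and Theorem IX.16), whose proofs are precisely the Gagliardo--Nirenberg--Sobolev inequality, extension for $C^1$ domains, and Riesz--Fr\'echet--Kolmogorov plus interpolation argument you outline. Since you yourself conclude that in practice you would invoke the standard Sobolev and Rellich--Kondrachov theorems, your route and the paper's are essentially identical.
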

\begin{proof}
For the proof see, for instance,  \cite{B83}, Corollary IX.14 and Theorem IX.16 - Remark 14ii).
\end{proof}
We consider the spaces of divergence free functions defined by
	$${ H}=
        \left\{ u\in {H}^1(\Omega)
        \mid \nabla\cdot u=0\right\},$$       
	$${ V}_{wall}=
        \left\{\psi\in  H_{\Gamma_{wall}}(\Omega)
        \mid \nabla\cdot \psi=0\right\}$$
and
	$${ V}_D=
        \left\{\psi\in  H_{\Gamma_D}(\Omega)
        \mid \nabla\cdot \psi=0\right\},$$
where $\Gamma_D$ refers to the Dirichlet boundary   $\Gamma_{in}\cup\Gamma_{wall}$.
In these definitions, for $\Gamma\in\{\Gamma_{wall},\Gamma_D\}$, we represent by ${H}_{\Gamma}$ the set $${H}_{\Gamma}=\left\{\psi\in  H^1(\Omega) \mid \gamma_{\Gamma} \psi=0\right\}.$$ 
The corresponding norms are defined by
$$\Vert.\Vert_{H}=\Vert.\Vert_{V_{D}}=\Vert.\Vert_{V_{wall}}=\Vert.\Vert_{H^1(\Omega)}.$$ 
 
        We also define 
$${H}_{0}^1(\Gamma)= \left\{v\in  L^2(\Gamma)
        \mid \nabla_s v \in  L^2(\Gamma), \, \gamma_{\partial\Gamma} v=0 \right\}$$ and
$${H}_{00}^{\frac{1}{2}}(\Gamma)= \left\{g\in  L^2(\Gamma)
        \mid \exists v \in  H^{1}(\Omega), \,v_{|_{\partial\Omega}} \in  H^{\frac{1}{2}}(\partial\Omega), \, \gamma_{\Gamma} v=g,\, \gamma_{\partial\Omega\setminus\Gamma} v=0 \right\}$$ a closed subspace of $H^{\frac{1}{2}}(\Gamma)$.

Note that we have the continuous embeddings $H_0^1(\Gamma) \subset H_{00}^{\frac{1}{2}}(\Gamma)$ and $ H_{00}^{\frac{1}{2}}(\Gamma)\subset L^2(\Gamma)$ (\cite{DL00}, pp. 397).

Finally, we set 
$$\hat{H}^{\frac{1}{2}}(\Gamma_1\cup\Gamma_2)=\left\{(g_1,g_2)\in H_{00}^{\frac{1}{2}}(\Gamma_1)\times H_{00}^{\frac{1}{2}}(\Gamma_2)\mid \int_{\Gamma_1}g_1\cdot n\, ds+\int_{\Gamma_2}g_2\cdot n\, ds=0   \right\}.$$
\section{State Equation}
The well-posedness of system (\ref{navierstokes}) concerning the existence and uniqueness for $g$ within an admissible class is required before studying the existence of solution of the optimal control problem. In \cite{KS98} the authors studied the evolutionary case setting the existence of a solution local in time, for the type of boundary conditions considered here. Concerning the stationary case, in \cite{K98} and \cite{G08} the existence of solution for a similar system was proved. Both authors considered Neumann conditions mixed with Dirichlet homogeneous conditions. In the later it was mentioned that no additional difficulties should be expected with non-homogeneous boundary conditions. In \cite{FR09}, the existence was shown, in the 2D case, for a system with mixed boundary conditions including Dirichlet non-homogeneous. Again the authors mentioned that the 3D case  could be proved using the same techniques. For the sake of clearness,  we show that system (\ref{navierstokes}) is in fact well-posed in the 3D case, following the ideas of \cite{FR09}.

We first start by considering the Stokes system
\begin{equation}\label{stokes}
\left\{ \begin{array}{ll}-\nu\Delta {u}+\nabla p= {h}& \qquad
	     \ \mbox{in} \ \Omega,\vspace{2mm} \\
             \nabla \cdot{{u}}=0&  \qquad \mbox{in} \ \Omega,\vspace{2mm}\\
             \gamma{u}={g}& \qquad \mbox{on} \ \Gamma_{in},\\
            \gamma{{u}}={0}& \qquad \mbox{on} \ \Gamma_{wall},\\
            \nu \partial_n{{u}}-pn={0}& \qquad \mbox{on} \ \Gamma_{out},
\end{array}\right.\end{equation}

\begin{definition}
Let ${g}\in\mbf{H}_{0}^1(\Gamma_{in}) $, ${h}\in \mbf{L}^{\frac{3}{2}}(\Omega)$. We call $u\in\mbf{V}_{{wall}}$ a weak solution of (\ref{stokes}) if $\gamma_{\Gamma_{in}}u=g$ and  
\begin{equation} \label{weakstokes}
\quad\nu\int\limits_{\Omega}\nabla u:\nabla v\,dx = \int\limits_{\Omega}{h} v\,dx,
             \end{equation}
for all $v\in\mbf{V}_D$.
\end{definition}
\begin{theorem}.
\begin{description}
\item [i)] There exists a unique solution $ u\in \mbf{V}_{wall}$ of problem $(\ref{weakstokes})$. For such solution there exists a distribution $p\in\mbf{L}^{\frac{3}{2}}(\Omega)$ such that $( u,p)\in{V}_{wall}\times{L}^2(\Omega)$ is a solution of (\ref{stokes}) in the sense of distributions. If $u$ and $p$ are smooth enough, then $p$ is unique and the boundary conditions in (\ref{stokes}) are verified point-wise.
\item[ii)] On the other hand, if $( u,p) \in{H}_{\Gamma_{wall}}\times L^{\frac{3}{2}}(\Omega)$ is a solution of problem $(\ref{stokes})$  in the sense of distributions, then $ u$ is a solution of (\ref{weakstokes}).
\end{description}
\end{theorem}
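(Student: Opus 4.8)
The plan is to split the statement into three parts: the variational well-posedness of the velocity (existence and uniqueness of $u\in\mbf{V}_{wall}$), the recovery of an associated pressure with the stated boundary condition and uniqueness, and the equivalence with the distributional formulation in part~ii).

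First I would reduce (\ref{weakstokes}) to a homogeneous problem by constructing a solenoidal lifting. Since $g\in\mbf{H}_0^1(\Gamma_{in})$ vanishes on $\partial\Gamma_{in}$, I extend it by zero over $\Gamma_{wall}$ to a boundary datum on $\Gamma_D$, lift it to some $W\in\mbf{H}^1(\Omega)$ with $\gamma_{\Gamma_{in}}W=g$ and $\gamma_{\Gamma_{wall}}W=0$, and then correct $W$ by solving a divergence problem $\nabla\cdot z=-\nabla\cdot W$ with $z\in\mbf{H}_{\Gamma_D}$; because the trace on $\Gamma_{out}$ is left free, the divergence operator maps onto all of $L^2(\Omega)$ and no global flux compatibility on $g$ is needed, so that $u_g:=W+z\in\mbf{V}_{wall}$ with $\gamma_{\Gamma_{in}}u_g=g$. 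Writing $u=u_g+u_0$ with $u_0\in\mbf{V}_D$, the problem becomes: find $u_0\in\mbf{V}_D$ with $\nu\int_\Omega\nabla u_0:\nabla v\,dx=\int_\Omega h\cdot v\,dx-\nu\int_\Omega\nabla u_g:\nabla v\,dx$ for all $v\in\mbf{V}_D$. The bilinear form is continuous and, since every $v\in\mbf{V}_D$ vanishes on $\Gamma_D$ (of positive measure), the Poincar\'e inequality makes it coercive; the right-hand side is a bounded functional because $h\in\mbf{L}^{\frac{3}{2}}(\Omega)$ pairs with $v\in\mbf{L}^{3}(\Omega)$ through the embedding $\mbf{H}^1(\Omega)\hookrightarrow\mbf{L}^6(\Omega)$ of Lemma~\ref{sobolev}. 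Lax--Milgram then yields a unique $u_0$, hence existence of $u$; uniqueness of $u$ follows because the difference of two solutions lies in $\mbf{V}_D$ and solves the homogeneous equation.

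To recover the pressure I would work on the larger space $\mbf{H}_{\Gamma_D}$ and consider the bounded functional $L(v)=\int_\Omega h\cdot v\,dx-\nu\int_\Omega\nabla u:\nabla v\,dx$, which vanishes on $\mbf{V}_D=\ker(\nabla\cdot)\cap\mbf{H}_{\Gamma_D}$ by the weak formulation. A de Rham-type argument, equivalently the closed-range (inf-sup) property of $\nabla\cdot:\mbf{H}_{\Gamma_D}\to L^2(\Omega)$, then produces $p$ with $L(v)=\int_\Omega p\,\nabla\cdot v\,dx$ for all $v\in\mbf{H}_{\Gamma_D}$, which is exactly $-\nu\Delta u+\nabla p=h$ together with the natural condition on $\Gamma_{out}$ in the distributional sense; since $\Omega$ is bounded, $p\in L^2(\Omega)\subset\mbf{L}^{\frac{3}{2}}(\Omega)$. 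The decisive point, specific to the mixed conditions, is again that the outflow trace is free, so $\nabla\cdot$ is onto all of $L^2(\Omega)$ rather than its mean-zero subspace; this removes the usual additive-constant indeterminacy of the pressure and gives uniqueness of $p$ once $u,p$ are smooth. For such smooth pairs, integrating by parts against $v\in\mbf{H}_{\Gamma_D}$ leaves only the boundary term $\int_{\Gamma_{out}}(\nu\partial_n u-pn)\cdot v\,ds$, whose vanishing for arbitrary such $v$ yields the point-wise Neumann condition, while the Dirichlet conditions are encoded in $u\in\mbf{V}_{wall}$ and $\gamma_{\Gamma_{in}}u=g$.

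For part~ii) the argument reverses: testing the distributional identity $-\nu\Delta u+\nabla p=h$ against $v\in\mbf{V}_D$ and integrating by parts makes the pressure term disappear since $\nabla\cdot v=0$, while the boundary contributions vanish because $v=0$ on $\Gamma_D$ and the free-traction condition holds on $\Gamma_{out}$, returning (\ref{weakstokes}); the pairings $p,h\in\mbf{L}^{\frac{3}{2}}(\Omega)$ against $v\in\mbf{L}^{3}(\Omega)$ justify the integrals, and the identities are first established for smooth $v$ and extended by density. I expect the main obstacle to lie in the two places where the mixed boundary conditions break the standard theory: constructing the solenoidal lifting $u_g$ without imposing a flux constraint on $g$, and carrying out the de Rham/inf-sup step on $\mbf{H}_{\Gamma_D}$ so as to obtain simultaneously the $\Gamma_{out}$ boundary condition and the uniqueness of $p$. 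By contrast, the coercivity and Lax--Milgram steps are routine.
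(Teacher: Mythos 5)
Your proposal is correct, but it reaches the theorem by a genuinely different route than the paper. For existence and uniqueness of $u$, the paper does not lift and apply Lax--Milgram: it minimizes the Dirichlet energy $E(u)=\frac{1}{2}\|\nabla u\|_{\mbf{L}^2(\Omega)}^2-(h,u)$ directly over the affine admissible set $A=\{u\in\mbf{H}_{\Gamma_{wall}},\ \gamma_{\Gamma_{in}}u=g\}$ (with the solenoidal constraint understood), using convexity, weak lower semi-continuity and coercivity; since the bilinear form is symmetric and coercive the two arguments are equivalent, and your explicit solenoidal lifting $u_g=W+z$ in fact settles the nonemptiness of $A$, a point the paper handles through its extension operator (Lemma~\ref{inoutextension}, via Girault--Raviart) rather than through surjectivity of the divergence. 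The more substantive divergence is in the pressure recovery: the paper keeps divergence-free test functions, notes $\nu\Delta u+h\in\mbf{W}^{-1,\frac{3}{2}}(\Omega)$, and invokes De Rham's theorem (Sohr, Lemma II.2.2.2) to get $p$ determined only up to an additive constant; because every $v\in\mbf{V}_D$ has zero flux through $\Gamma_{out}$, the smooth-case integration by parts there only yields $\nu\partial_n u-pn=c_0 n$ on $\Gamma_{out}$ (via a corollary of the fundamental lemma of the calculus of variations applied to zero-flux traces), and the constant is then pinned down by testing with a field of unit outflow. You instead test with non-solenoidal $v\in\mbf{H}_{\Gamma_D}$ and rely on the closed-range/inf-sup property of $\nabla\cdot:\mbf{H}_{\Gamma_D}\to L^2(\Omega)$, whose surjectivity onto all of $L^2(\Omega)$ (thanks to the free $\Gamma_{out}$ trace) builds the natural boundary condition into the variational identity and removes the additive indeterminacy of $p$ at once, giving uniqueness without the constant-fixing step and even $p\in L^2(\Omega)$, consistent with the statement $(u,p)\in V_{wall}\times L^2(\Omega)$. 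What each approach buys: yours is structurally cleaner (one functional-analytic fact yields pressure, boundary condition and uniqueness simultaneously) but obliges you to actually prove the mixed-boundary inf-sup/surjectivity, which you correctly flag as the decisive point (it follows, e.g., by correcting the mean with a fixed field of unit flux through $\Gamma_{out}$ and applying the standard $H_0^1$ divergence result); the paper's route uses only off-the-shelf De Rham theory at the price of the extra trace-and-constant argument on $\Gamma_{out}$. Your treatment of part ii) coincides with the paper's terse integration-by-parts argument, with the same implicit weak interpretation of the traction on $\Gamma_{out}$, so there is no gap there relative to the paper's own level of rigor.
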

 
%
\begin{proof}

\begin{description}
\item[i)] Consider the auxiliar minimization problem 
$$\min_A E(u):=\frac{1}{2}\Vert \nabla v\Vert_{\mbf{L}^2(\Omega)}^2-(h,u)$$
where 
$$A=\{u\in \mbf{H}_{\Gamma_{wall}}, \, \gamma_{\Gamma_{in}}u=g\}.$$
The functional $E:\mbf{H}^1(\Omega)\to \mathbb{R}$ is continuous and convex on  $\mbf{H}^1(\Omega)$ and thus weakly lower semi-continuous with respect to the $\mbf{H}^1(\Omega)$ norm. Also, the admissibility set $A$ is sequentially weakly closed. 
Finally, since $E$ verifies the coercivity property, the classical theory  of the calculus of variations ensures the existence of a unique solution $\bar{u}$ for the minimization problem. Hence, $\bar{u}$ is also the unique solution of the necessary and sufficient optimality condition  
$$\nu\int\limits_{\Omega}\nabla u:\nabla v\,dx = \int\limits_{\Omega}{h} v\,dx,\quad \forall v\in\mbf{H}_{\Gamma_D}$$ and therefore (\ref{weakstokes}) has a unique solution.

If we take $v\in  \mbf{H}_{\Gamma_D}\cup\mbf{C}_0^{\infty}(\Omega)$ and integrate (\ref{weakstokes}) by parts,  we obtain 
$$\int_{\Omega}(\nu\Delta \bar{u} +h)\cdot v=0\Leftrightarrow (\Delta \bar{u} +h, v)=0, \quad \forall v\in \mbf{H}_{\Gamma_D}\cup\mbf{C}_0^{\infty}(\Omega).$$ 
Due to the  inclusion $L^{\frac{3}{2}}(\Omega)=(L^{{3}}(\Omega))'\subset (W_0^{1,{3}}(\Omega))'=W^{-1,\frac{3}{2}}(\Omega) $, we have  $\nu\Delta \bar{u} +h\in \mbf{W}^{-1,\frac{3}{2}}(\Omega)$. Therefore by De Rham's theorem (\cite{S01} Lemma II.2.2.2) there exits a distribution $p\in L^{\frac{3}{2}}(\Omega)$  such that $\nabla p\in \mbf{L}^{\frac{3}{2}}(\Omega)$ and $(\nu\Delta \bar{u} +h, v)=(\nabla p,v)$ that is, system (\ref{stokes}) is verified in the sense of distributions. 
Let us now assume that $\bar{u}$ and $p$ are smooth and replace $h$ by $-\nu\Delta \bar{u} + \nabla p$ in (\ref{stokes}). Integrating by parts we obtain
$$\int_{\Gamma_{out}}(\nu \partial_n\bar{u}-pn) \cdot v \,ds=0\, ,\quad \forall v\in \mbf{V}_D.$$  
Now consider $w\in\mbf{C}_0^{\infty}(\Gamma_{out})$ such that $\int_{\Gamma_{out}} w\cdot n\, ds=0$. If we define 
\begin{equation}
\bar{w}=\left\{ \begin{array}{ll} w & \qquad \mbox{on }  \Gamma_D=\Gamma_{in}\cup\Gamma_{wall}\\
             0 & \qquad \mbox{on }  \Gamma_{out},
\end{array}\right.\end{equation}
we have
$\bar{w}\in\mbf{C}_0^{\infty}(\partial\Omega)$ and $\int_{\partial\Omega} \bar{w}\cdot n\, ds=0$. As a result, there exists $v\in\mbf{V}_D$ such that $\gamma_{\partial\Omega}v=\bar{w}$ and $\gamma_{\Gamma_{out}}v=w$. Consequently,
$$\int_{\Gamma_{out}}(\nu \partial_n\bar{u}-pn) \cdot w \,ds=0,\quad \forall w\in \mbf{C}_0^{\infty}(\Gamma_{out})\text{ such that } \int_{\Gamma_{out}} w\cdot n\, ds=0.$$  
In view of a corollary of the fundamental lemma of the calculus of variations (\cite{D09} Cor.1.25 p.23), we have 
$$\nu \partial_n\bar{u}-pn=c_0n \text{ on }\Gamma_{out}, $$ where $c_0$ is a constant. 
Let us now take $\bar{p}=p+c$ as another distribution such that (\ref{stokes}) is verified. Then we have   
$$0=\int_{\Gamma_{out}}(\nu \partial_n\bar{u}-pn) \cdot v=\int_{\Gamma_{out}}(c-c_0)n \cdot v \,ds\quad \forall v\in \mbf{V}_D.$$  
Choosing $v$ such that $\int_{\Gamma_{out}}n \cdot v \,ds=1$, we conclude that $(\bar{u},\bar{p})$, with $c=c_0$, is the unique solution of (\ref{stokes}).
\item[ii)] If $u \in\mbf{H}_{\Gamma_{wall}}$ is a solution of (\ref{stokes}) then it is clear that $u\in \mbf{V}_{wall}$ and, as a result of  integration by parts, that (\ref{weakstokes}) is verified.
\end{description}
\end{proof}

Before obtaining an estimate for the Stokes problem, we first  recall some related results. 
\begin{lemma}\label{generalextension}
Let $g\in\mbf{H}^{\frac{1}{2}}(\partial\Omega) $ be such that $$\int_{\partial\Omega\setminus\Gamma}g\cdot n\, ds=\int_{\Gamma}g\cdot n\, ds=0.$$ Then there exists $v\in\mbf{H}$ such that $\gamma v=g$. 
\end{lemma}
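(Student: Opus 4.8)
The plan is to reduce this divergence-constrained extension problem to the solvability of the divergence equation with homogeneous Dirichlet data, a classical tool usually attributed to Bogovskii. The argument decouples into two essentially independent steps, and the two flux hypotheses will enter only through their sum.

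First I would forget the divergence constraint and merely lift $g$ into the interior. Since $\Omega$ is bounded and Lipschitz and $g\in\mbf{H}^{\frac{1}{2}}(\partial\Omega)$, the trace operator $\gamma:\mbf{H}^1(\Omega)\to\mbf{H}^{\frac{1}{2}}(\partial\Omega)$ is surjective and admits a bounded linear right inverse; hence there is $w\in\mbf{H}^1(\Omega)$ with $\gamma w=g$. In general $\nabla\cdot w\neq 0$, so I set $f:=\nabla\cdot w\in L^2(\Omega)$ and aim to subtract from $w$ a correction that has zero trace and whose divergence equals $f$.

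Next I would verify the compatibility condition required to invert the divergence. By the divergence theorem together with the hypotheses, $\int_\Omega f\,dx=\int_{\partial\Omega}w\cdot n\,ds=\int_{\partial\Omega}g\cdot n\,ds=\int_{\Gamma}g\cdot n\,ds+\int_{\partial\Omega\setminus\Gamma}g\cdot n\,ds=0$, so $f$ has vanishing mean. The classical result on the divergence equation on a bounded Lipschitz domain then provides $z\in\mbf{H}^1_0(\Omega)$ with $\nabla\cdot z=f$ (and $\|z\|_{\mbf{H}^1}\le C\|f\|_{L^2}$). Putting $v:=w-z$ gives $\nabla\cdot v=f-f=0$, so $v\in\mbf{H}$, while $\gamma v=\gamma w-\gamma z=g-0=g$ because $z$ vanishes on $\partial\Omega$. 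This $v$ is the desired solenoidal extension.

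The only substantial point is the existence of $z$, i.e. the bounded right inverse of the divergence on $\mbf{H}^1_0(\Omega)$; this is precisely where both the Lipschitz regularity of $\Omega$ and the zero-mean condition on $f$ are used, and I would invoke it from a standard reference (Girault--Raviart, or Galdi) rather than reprove it. I also note that the argument uses only that the total flux $\int_{\partial\Omega}g\cdot n\,ds$ vanishes, so requiring the flux to vanish separately on $\Gamma$ and on $\partial\Omega\setminus\Gamma$ is stronger than strictly needed here; presumably the hypothesis is phrased this way because that is the form in which it will later be applied to the mixed boundary data of \eqref{navierstokes}.
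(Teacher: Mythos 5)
Your proposal is correct, and it is essentially the paper's proof spelled out: the paper disposes of this lemma with a bare citation to \cite{GR86}, and your argument (bounded right inverse of the trace to lift $g$, then a Bogovskii-type correction $z\in\mbf{H}^1_0(\Omega)$ with $\nabla\cdot z=\nabla\cdot w$, legitimate since $\int_\Omega \nabla\cdot w\,dx=\int_{\partial\Omega}g\cdot n\,ds=0$) is exactly the standard construction behind the result cited there. Your side remark is also accurate: only the vanishing of the \emph{total} flux is used, and the split hypothesis on $\Gamma$ and $\partial\Omega\setminus\Gamma$ is stated because that is the form needed when the lemma is applied to the mixed boundary decomposition, as in Lemma~\ref{inoutextension}.
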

\begin{proof}See, for instance, \cite{GR86}.
\end{proof}
It is now straightforward to prove the next lemma.
\begin{lemma}\label{inoutextension}
Let $(g_1,g_2)\in \hat{H}^{\frac{1}{2}}(\Gamma_{in}\cup\Gamma_{out})$. Then there is a bounded extension operator $E:\hat{H}^{\frac{1}{2}}(\Gamma_{in}\cup\Gamma_{out})\to\mbf{V}_{wall}$, $\forall v\in\mbf{V}_{wall}$, such that for $v=E(g_1,g_2)$ we have $g_1=\gamma_{\Gamma_{in}}v, \, g_2=\gamma_{\Gamma_{out}}v$.
\end{lemma}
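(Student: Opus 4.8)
The plan is to reduce the statement to Lemma~\ref{generalextension} by gluing $g_1$ and $g_2$ into a single boundary datum and, afterwards, correcting for the divergence in a linear and bounded way. First I would define $g:\partial\Omega\to\R^3$ by
\begin{equation*}
g=\left\{\begin{array}{ll} g_1 & \mbox{on }\Gamma_{in},\\ 0 & \mbox{on }\Gamma_{wall},\\ g_2 & \mbox{on }\Gamma_{out}.\end{array}\right.
\end{equation*}
The crucial observation is that $g\in\mbf{H}^{\frac{1}{2}}(\partial\Omega)$, and this is precisely the reason the spaces $H_{00}^{\frac{1}{2}}$ were introduced: by definition each $g_i\in H_{00}^{\frac{1}{2}}(\Gamma_i)$ is the $\Gamma_i$-trace of some $H^1(\Omega)$ field whose full boundary trace lies in $H^{\frac{1}{2}}(\partial\Omega)$ and vanishes on $\partial\Omega\setminus\Gamma_i$, so the extension of $g_i$ by zero to the rest of $\partial\Omega$ belongs to $\mbf{H}^{\frac{1}{2}}(\partial\Omega)$ with norm controlled by $\|g_i\|_{H_{00}^{1/2}}$. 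Summing the two zero-extensions produces exactly $g$, whence $\|g\|_{\mbf{H}^{1/2}(\partial\Omega)}\le C\big(\|g_1\|_{H_{00}^{1/2}(\Gamma_{in})}+\|g_2\|_{H_{00}^{1/2}(\Gamma_{out})}\big)$.

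Next I would verify the flux hypotheses of Lemma~\ref{generalextension} with $\Gamma=\Gamma_{in}\cup\Gamma_{out}$, so that $\partial\Omega\setminus\Gamma=\Gamma_{wall}$. Since $g\equiv 0$ on $\Gamma_{wall}$ we trivially get $\int_{\Gamma_{wall}}g\cdot n\,ds=0$, while
\begin{equation*}
\int_{\Gamma}g\cdot n\,ds=\int_{\Gamma_{in}}g_1\cdot n\,ds+\int_{\Gamma_{out}}g_2\cdot n\,ds=0
\end{equation*}
is exactly the compatibility condition defining $\hat{H}^{\frac{1}{2}}(\Gamma_{in}\cup\Gamma_{out})$. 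Lemma~\ref{generalextension} then yields $v\in\mbf{H}$ with $\gamma v=g$; in particular $\nabla\cdot v=0$ and $\gamma_{\Gamma_{wall}}v=0$, so $v\in\mbf{V}_{wall}$, while $\gamma_{\Gamma_{in}}v=g_1$ and $\gamma_{\Gamma_{out}}v=g_2$, as required.

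It then remains to upgrade this existence statement into a \emph{bounded linear} operator $E$. The cleanest route is to make the construction behind Lemma~\ref{generalextension} explicit and manifestly linear: apply a bounded linear right inverse of the trace operator to $g$ to obtain $w\in\mbf{H}^1(\Omega)$ with $\gamma w=g$ and $\|w\|_{H^1}\le C\|g\|_{H^{1/2}(\partial\Omega)}$; this $w$ need not be solenoidal, but $\nabla\cdot w\in L^2(\Omega)$ has zero mean since $\int_\Omega\nabla\cdot w=\int_{\partial\Omega}g\cdot n=0$. Then apply the Bogovskii operator to solve $\nabla\cdot z=\nabla\cdot w$ with $z\in\mbf{H}_0^1(\Omega)$ and $\|z\|_{H^1}\le C\|\nabla\cdot w\|_{L^2}$, and set $E(g_1,g_2):=w-z$. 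Since $z$ has vanishing full trace, $w-z$ retains the traces of $w$ while being divergence-free, and because $g=0$ on $\Gamma_{wall}$ we land in $\mbf{V}_{wall}$; each step being linear and bounded, $E$ is a bounded linear operator with the stated norm estimate.

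The part I would be most careful about is the very first step, namely that gluing $g_1$ and $g_2$ by zero really lands in $\mbf{H}^{\frac{1}{2}}(\partial\Omega)$ with a norm controlled by the $H_{00}^{\frac{1}{2}}$ norms. This is genuinely a property of $H_{00}^{\frac{1}{2}}$ and would fail for the ordinary restriction spaces $H^{\frac{1}{2}}(\Gamma_i)$; the zero-flux compatibility condition in $\hat{H}^{\frac{1}{2}}$ then does the remaining work of matching the global solenoidal-extension requirement of Lemma~\ref{generalextension}. Everything else is a routine chaining of bounded linear maps.
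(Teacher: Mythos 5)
Your proof is correct and takes essentially the same route the paper intends: the paper offers no written argument (it simply calls the lemma ``straightforward'' after Lemma~\ref{generalextension}), and your gluing of $g_1,g_2$ by zero across $\Gamma_{wall}$ --- legitimate precisely because the $H_{00}^{\frac{1}{2}}$ spaces make zero-extension land in $\mbf{H}^{\frac{1}{2}}(\partial\Omega)$ --- together with the flux check from the definition of $\hat{H}^{\frac{1}{2}}$ is exactly the intended reduction. Your trace-lifting-plus-Bogovskii construction additionally supplies the linearity and boundedness of $E$ that the paper leaves implicit, which is a completion of the sketch rather than a departure from it.
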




As a result, we can  obtain the following estimate for the solution.
\begin{lemma}\label{stokesestimate}
Let $\sol:\mbf{H}_{00}^{\frac{1}{2}}(\Gamma_{in})\times\mbf{L}^{\frac{3}{2}}(\Omega)\to\mbf{V}_{wall}$ be the solution operator to (\ref{weakstokes}). Then, if $v=\sol(g,h)$, we have 
$$\|v \|_{\mbf{V}_{wall}}^2=\|v \|_{\mbf{H}^1(\Omega)}^2\leq c \left(\|g\|_{\mbf{H}_{00}^{\frac{1}{2}}(\Gamma_{in})}^2+\|h\|_{\mbf{L}^{\frac{3}{2}}(\Omega)}^2\right),$$
where $c>0$ is independent of $(g,h)$.
\end{lemma}
\begin{proof}
Using Lemma~\ref{inoutextension} we see that $v=\E g+\bar{v}$ with $\bar{v}=v-\E g\in \mbf{V}_D$. Hence
$$\|\nabla v\|_{{\bf L}^2(\Omega)}^2=(\nabla v,\nabla \E g)+(\nabla v,\nabla \bar{v}),$$ which, in view of the definition of weak solution, can be written as 
$$\|\nabla v\|_{{\bf L}^2(\Omega)}^2=(\nabla v,\nabla \E g)+\frac{1}{\nu}(h, \bar{v}).$$
We deal with each term of the right-hand side separately. Using Young's inequality, together with the fact that $\E$ is bounded, we have
\begin{align}
|(\nabla v,\nabla \E g)|&\leq c_1\|\nabla v\|_{{\bf L}^2(\Omega)}\|\nabla \E g\|_{{\bf L}^2(\Omega)}\leq c_2\|\nabla v\|_{{\bf L}^2(\Omega)}\| \E g\|_{\mbf{H}^1(\Omega)}\\
&\leq c_3\|\nabla v\|_{{\bf L}^2(\Omega)}\| g\|_{\mbf{H}_{00}^{\frac{1}{2}}(\Gamma_{in})}\leq \varepsilon \|\nabla v\|_{{\bf L}^2(\Omega)}^2+\frac{c_4}{\varepsilon}\| g\|_{\mbf{H}_{00}^{\frac{1}{2}}(\Gamma_{in})}^2,
\end{align} for $\varepsilon >0$.
Moreover, using Poincar\'e and Young inequalities and the Sobolev embedding $\mbf{H}^1(\Omega)\subset \mbf{L}^3(\Omega)$ (see Lemma~\ref{sobolev}.i), we have 
\begin{align}
|( h, \bar{v})|&\leq c_5\|h\|_{{\bf L}^\frac{3}{2}(\Omega)}\|\nabla \bar{v}\|_{{\bf L}^2(\Omega)}\leq \varepsilon \|\nabla \bar{v}\|_{{\bf L}^2(\Omega)}^2+\frac{c_6}{\varepsilon}\| h\|_{{\bf L}^\frac{3}{2}(\Omega)}^2.
\end{align}
And, by similar arguments,
\begin{align}
\|\nabla \bar{v}\|_{{\bf L}^2(\Omega)}^2=\|\nabla v-\nabla \E g\|_{{\bf L}^2(\Omega)}^2&\leq c_7\left(\|\nabla v\|_{{\bf L}^2(\Omega)}^2+\| \E g\|_{\mbf{H}^1(\Omega)}^2\right)\\
&\leq c_8\left(\|\nabla v\|_{{\bf L}^2(\Omega)}^2+\| g\|_{\mbf{H}_{00}^{\frac{1}{2}}(\Gamma_{in})}^2\right).
\end{align}
Therefore 
$$\|\nabla v\|_{{\bf L}^2(\Omega)}^2\leq \varepsilon (1+c_8) \|\nabla {v}\|_{{\bf L}^2(\Omega)}^2+\frac{c_6}{\varepsilon}\| h\|_{{\bf L}^\frac{3}{2}(\Omega)}^2+(\frac{c_4}{\varepsilon}+c_8\varepsilon)\| g\|_{\mbf{H}_{00}^{\frac{1}{2}}(\Gamma_{in})}^2$$ and consequently 
$$\|v\|_{\mbf{H}^1(\Omega)}^2\leq c_9\|\nabla v\|_{{\bf L}^2(\Omega)}^2\leq c\left (\| h\|_{{\bf L}^\frac{3}{2}(\Omega)}^2+\| g\|_{\mbf{H}_{00}^{\frac{1}{2}}(\Gamma_{in})}^2\right)$$ for a certain constant $c>0$.

\end{proof}

We can now prove the existence of a solution for the Navier-Stokes system (\ref{navierstokes}).

\begin{definition}
Let ${g}\in\mbf{H}_{0}^1(\Gamma_{in}) $, ${f}\in \mbf{L}^{\frac{3}{2}}(\Omega)$. We say that $u\in\mbf{V}_{wall}$ is a weak solution of (\ref{navierstokes}) if $\gamma_{\Gamma_{in}}u=g$ and  
\begin{equation} \label{weaknavierstokes}
\quad\nu\int\limits_{\Omega}\nabla u:\nabla v\,dx+\int\limits_{\Omega}({u}\cdot\nabla) u v\,dx = \int\limits_{\Omega}{f} v\,dx,
             \end{equation}
for all $v\in\mbf{V}_D$.
\end{definition}
We need the following result.
\begin{lemma}\label{convective}
If $u\in \mbf{H}^1(\Omega)$, then $u\cdot \nabla u\in \mbf{L}^{\frac{3}{2}}(\Omega)$ and $\|u\cdot \nabla u\|_{\mbf{L}^{\frac{3}{2}}(\Omega)}\leq\|u\|_{\mbf{H}^{1}(\Omega)}^2$.
\end{lemma}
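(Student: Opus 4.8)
The plan is to reduce everything to a single application of Hölder's inequality, with the exponents forced by the target space $\mbf{L}^{\frac{3}{2}}(\Omega)$ and by the critical Sobolev exponent in dimension $n=3$. Writing the convective term componentwise as $(u\cdot\nabla u)_i=\sum_{j}u_j\,\partial_j u_i$, I would first record the pointwise bound $|u\cdot\nabla u|\leq |u|\,|\nabla u|$, where $|\nabla u|$ denotes the Frobenius norm of the Jacobian. This converts the problem into estimating the $\mbf{L}^{\frac{3}{2}}$-norm of a product of two scalar factors.

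Next I would apply Hölder's inequality to that product with the conjugate exponents $6$ and $2$, which satisfy $\tfrac{1}{6}+\tfrac{1}{2}=\tfrac{2}{3}$, exactly the reciprocal of $\tfrac{3}{2}$. This yields
$$\|u\cdot\nabla u\|_{\mbf{L}^{\frac{3}{2}}(\Omega)}\leq \|u\|_{\mbf{L}^{6}(\Omega)}\,\|\nabla u\|_{\mbf{L}^{2}(\Omega)}.$$
The factor $\|\nabla u\|_{\mbf{L}^{2}(\Omega)}$ is already controlled by $\|u\|_{\mbf{H}^{1}(\Omega)}$. For the other factor I would invoke the continuous Sobolev embedding $\mbf{H}^{1}(\Omega)\hookrightarrow\mbf{L}^{6}(\Omega)$, which is precisely Lemma~\ref{sobolev}.ii with $n=3$, $p=2$, so that $p^*=\tfrac{2\cdot 3}{3-2}=6$; this gives $\|u\|_{\mbf{L}^{6}(\Omega)}\leq \|u\|_{\mbf{H}^{1}(\Omega)}$. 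Combining the two bounds produces $\|u\cdot\nabla u\|_{\mbf{L}^{\frac{3}{2}}(\Omega)}\leq\|u\|_{\mbf{H}^{1}(\Omega)}^2$, which also shows in passing that $u\cdot\nabla u\in\mbf{L}^{\frac{3}{2}}(\Omega)$.

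There is no genuine obstacle here; the content of the lemma lies entirely in choosing the exponents correctly. The only point requiring care is that the embedding into $\mbf{L}^{6}(\Omega)$ is the limiting (critical) case, available in three dimensions but only at the endpoint exponent, so one must use the continuous embedding of Lemma~\ref{sobolev}.ii rather than the compact embedding of part i. The statement is written with constant $1$; strictly the Sobolev embedding carries a constant $C_S$ and the sharp bound is $C_S\|u\|_{\mbf{H}^{1}(\Omega)}^2$, so I would either absorb $C_S$ into the normalization of the $\mbf{H}^1$-norm or simply remark that the inequality holds up to this fixed constant depending only on $\Omega$.
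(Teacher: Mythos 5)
Your proof is correct and follows essentially the same route as the paper: H\"older's inequality with exponents $6$ and $2$ applied to $|u|\,|\nabla u|$, combined with the continuous Sobolev embedding $\mbf{H}^1(\Omega)\hookrightarrow\mbf{L}^6(\Omega)$ from Lemma~\ref{sobolev}.ii. Your closing remark about the constant is also apt, since the paper's own displayed estimate carries a constant $c$ that the lemma's statement silently absorbs.
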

\begin{proof}
Using H\"older's inequality (\cite{B83}, IV.2, Remark 2.) and  the Sobolev embedding $H^1(\Omega)\subset L^6(\Omega)$ (see Lemma~\ref{sobolev}.ii)) we have 
$$\int_{\Omega}|u\cdot \nabla u|^{\frac{3}{2}}\,dx\leq\|u\|_{\mbf{L}^6(\Omega)}^{\frac{3}{2}}\|\nabla u\|_{\mbf{L}^2(\Omega)}^{\frac{3}{2}}\leq c\|u\|_{\mbf{H}^1(\Omega)}^{\frac{3}{2}} \|\nabla u\|_{\mbf{L}^2(\Omega)}^{\frac{3}{2}}\leq c\|u\|_{\mbf{H}^{1}(\Omega)}^3\leq \infty.$$ 

\end{proof}
\begin{theorem}\label{navierstokesexistence}
Let ${g}\in\mbf{H}_{0}^1(\Gamma_{in}) $ such that $\|g\|_{\mbf{H}_{0}^1(\Gamma_{in}) }\leq \rho$, for $\rho>0$ sufficiently small,  and ${f}\in \mbf{L}^{\frac{3}{2}}(\Omega)$. Then, there exists a unique  weak solution  $u\in\mbf{V}_{wall}$ of the Navier-Stokes system (\ref{navierstokes}) which verifies
\begin{equation}\label{navierstokesestimate}\|u\|_{\mbf{H}^1(\Omega)}^2 \leq \alpha \left(\|g\|_{\mbf{H}_{0}^1(\Gamma_{in}) }^2\right)+\|f\|_{\mbf{{L}}^{\frac{3}{2}}(\Omega)}^2,
\end{equation}
where $\alpha(s)=c(s^2+s)$.
\end{theorem}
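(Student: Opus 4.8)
The plan is to recast the weak problem (\ref{weaknavierstokes}) as a fixed-point problem for the Stokes solution operator and to apply the Banach fixed-point theorem, exploiting the smallness of $g$ to obtain simultaneously existence, uniqueness and the estimate (\ref{navierstokesestimate}). Given $w\in\mbf{V}_{wall}$, Lemma~\ref{convective} guarantees that $(w\cdot\nabla)w\in\mbf{L}^{\frac{3}{2}}(\Omega)$, so that $f-(w\cdot\nabla)w\in\mbf{L}^{\frac{3}{2}}(\Omega)$ and the linear Stokes problem with this right-hand side admits a unique solution by Theorem~3.1 and Lemma~\ref{stokesestimate}. I would therefore define the map $T:\mbf{V}_{wall}\to\mbf{V}_{wall}$ by $T(w)=\sol\bigl(g,\,f-(w\cdot\nabla)w\bigr)$. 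By construction, and by linearity of the weak Stokes equation in its data, $u$ is a weak solution of (\ref{weaknavierstokes}) if and only if $u=T(u)$; thus it suffices to produce a unique fixed point of $T$.

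The strategy is to show that $T$ maps a closed ball $B_R=\{w\in\mbf{V}_{wall}:\|w\|_{\mbf{H}^1(\Omega)}\le R\}$ into itself and is a contraction there, for a suitable radius $R$. For the self-mapping property I would use the Stokes estimate of Lemma~\ref{stokesestimate} together with Lemma~\ref{convective} and the continuous embedding $\mbf{H}_0^1(\Gamma_{in})\subset\mbf{H}_{00}^{\frac{1}{2}}(\Gamma_{in})$, obtaining a bound of the form $\|T(w)\|_{\mbf{H}^1(\Omega)}^2\le c\bigl(\|g\|_{\mbf{H}_0^1(\Gamma_{in})}^2+(\|f\|_{\mbf{L}^{\frac{3}{2}}(\Omega)}+R^2)^2\bigr)$, which is $\le R^2$ for an appropriate choice of $R$ once $\rho$ is small. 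For the contraction property I would write $T(w_1)-T(w_2)=\sol\bigl(0,\,(w_2\cdot\nabla)w_2-(w_1\cdot\nabla)w_1\bigr)$, use the algebraic identity $(w_1\cdot\nabla)w_1-(w_2\cdot\nabla)w_2=(w_1\cdot\nabla)(w_1-w_2)+((w_1-w_2)\cdot\nabla)w_2$, and estimate each term in $\mbf{L}^{\frac{3}{2}}(\Omega)$ by H\"older's inequality and the Sobolev embedding $\mbf{H}^1(\Omega)\subset\mbf{L}^6(\Omega)$ (Lemma~\ref{sobolev}), which yields $\|T(w_1)-T(w_2)\|_{\mbf{H}^1(\Omega)}\le cR\,\|w_1-w_2\|_{\mbf{H}^1(\Omega)}$. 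Choosing $\rho$, and hence $R$, small enough that $cR<1$ makes $T$ a contraction, and the Banach fixed-point theorem delivers a unique $u\in B_R$.

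Uniqueness and the estimate (\ref{navierstokesestimate}) then follow from the same quantitative bounds: reinserting $u=T(u)$ into the Stokes estimate and absorbing the quadratic contribution $\|u\|_{\mbf{H}^1(\Omega)}^2$ into the left-hand side via the smallness of $R$ produces a bound of the announced form $\|u\|_{\mbf{H}^1(\Omega)}^2\le\alpha(\|g\|_{\mbf{H}_0^1(\Gamma_{in})}^2)+\|f\|_{\mbf{L}^{\frac{3}{2}}(\Omega)}^2$ with $\alpha(s)=c(s^2+s)$, while any two solutions are forced to coincide by the contraction estimate applied to their difference. The main obstacle is the treatment of the convective (trilinear) term: it is precisely the presence of the open, Neumann-type outflow condition on $\Gamma_{out}$ that prevents a classical Leray energy estimate, since the boundary integral $\int_{\Gamma_{out}}|u|^2\,u\cdot n\,ds$ arising from testing with $u$ has no definite sign, so global-in-data existence is unavailable and the smallness of $g$ is genuinely needed to close both the self-mapping and contraction estimates. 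Routing the nonlinearity through the linear solution operator $\sol$, rather than through a direct energy identity, is what makes the argument go through, but it shifts the entire burden onto the sharp contraction constant, whose control is the delicate point.
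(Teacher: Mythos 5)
Your proposal is correct and rests on the same mechanism as the paper's proof: a Banach fixed-point argument built on the Stokes solution operator $\sol$ of Lemma~\ref{stokesestimate}, with the nonlinearity controlled through Lemma~\ref{convective}, the splitting $(w_1\cdot\nabla)w_1-(w_2\cdot\nabla)w_2=(w_1\cdot\nabla)(w_1-w_2)+((w_1-w_2)\cdot\nabla)w_2$, and the embedding $\mbf{H}^1(\Omega)\subset\mbf{L}^6(\Omega)$. The genuine difference is where the iteration lives. You iterate directly on the velocity, $T(w)=\sol\bigl(g,\,f-(w\cdot\nabla)w\bigr)$ on a ball of $\mbf{V}_{wall}$, whereas the paper (following \cite{FR09}) iterates on the forcing term, seeking $h\in\mbf{\hat{L}}^{\frac{3}{2}}(\Omega)$ with $h=\C(h)=-\pro\bigl(\sol(g,h)\cdot\nabla\sol(g,h)-f\bigr)$; this obliges the paper to introduce the projection $\pro$ and to invoke the density of $\mbf{V}_D$ in $\mbf{\hat{L}}^{3}(\Omega)$ just to establish the equivalence with (\ref{weaknavierstokes}). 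Your formulation is the more elementary: the equivalence ``$u$ solves (\ref{weaknavierstokes}) iff $u=T(u)$'' follows from uniqueness of the weak Stokes solution alone, with no projection or density argument. Conversely, the paper's formulation keeps the unknown in an $L^{\frac{3}{2}}$ ball and extracts the bound (\ref{navierstokesestimate}) by summing the geometric series of Picard increments $\|h_k-h_{k-1}\|\leq\bar{c}^{\,k-1}\|\C(0)\|$, where you obtain the same bound by absorbing the quartic term $\|u\|_{\mbf{H}^1(\Omega)}^4$ into the left-hand side; the two computations yield the same $\alpha(s)=c(s^2+s)$ up to constants (the paper is itself cavalier in silently normalizing the coefficient of $\|f\|_{\mbf{L}^{\frac{3}{2}}(\Omega)}^2$ to $1$ in the final line).

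One caveat, which you inherited faithfully from the paper rather than introduced: the self-mapping step. Your inequality $c\bigl(\rho^2+(\|f\|_{\mbf{L}^{\frac{3}{2}}(\Omega)}+R^2)^2\bigr)\leq R^2$, combined with the contraction requirement $cR<1$, admits a solution $R$ only if $\|f\|_{\mbf{L}^{\frac{3}{2}}(\Omega)}$ is itself small (the quadratic inequality $\sqrt{c}R^2-R+\sqrt{c}\|f\|\leq 0$ has no root otherwise), whereas the theorem asserts smallness of $g$ only. The paper's proof has exactly the same unjustified step: ``Therefore $\sol$ maps $B_\delta$ into itself'' is asserted without proof, and holds only when $\|\C(0)\|\leq(1-\bar{c})\delta$, again forcing $f$ small. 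So your argument matches the paper's in both substance and in this gap; your closing observation that the sign-indefinite outflow term $\int_{\Gamma_{out}}|u|^2\,u\cdot n\,ds$ blocks a Leray-type global energy estimate is accurate and correctly identifies why smallness of the data is structural for these mixed boundary conditions.
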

Before proceeding to the proof of the theorem, let us introduce another definition.

\begin{definition}
We define the projection operator $\pro:\mbf{L}^{\frac{3}{2}}(\Omega)\to \mbf{\hat{L}}^{\frac{3}{2}}(\Omega)$ as the solution of the equation 
$$(\pro h,v)=(h,v),\quad \forall v\in \mbf{\hat{L}}^3(\Omega),$$ where
$$\mbf{\hat{L}}^p(\Omega)=\left\{v\in \mbf{L}^p(\Omega)\mid \nabla \cdot v=0,\, \gamma_{\Gamma_D} (v\cdot n)=0 \right\}.$$
\end{definition}
 
\begin{proof} [Proof of Theorem~\ref{navierstokesexistence}.]
We look for $h\in\mbf{\hat{L}}^{\frac{3}{2}}(\Omega)$ such that the corresponding solution to the Stokes system $u=\sol(g,h)$ is also a solution of (\ref{weaknavierstokes}). For this purpose we will use a fixed point argument.
If we replace such $u=\sol(g,h)$ in (\ref{weaknavierstokes}), we get
$$\nu(\nabla \sol ,\nabla v)+(\sol\cdot\nabla \sol,v)=(f,v)\quad \forall v\in \mbf{V}_D, $$ 
which, by definition of $\sol$, is equivalent to 
\begin{equation}\nonumber
(h,  v)+(\sol\cdot\nabla \sol,v)=(f,v)\quad \forall v\in \mbf{V}_D
\end{equation}
which is also equivalent to 
\begin{equation}\label{eqproof}
(h+\sol\cdot\nabla \sol-f,v)=0\quad \forall v\in \mbf{V}_D. 
\end{equation}
Using Lemma~\ref{convective} and the fact that $\mbf{V}_D$ is dense in $\mbf{\hat{L}}^3(\Omega)$,  we can see that, from  equation (\ref{eqproof}), we have
\begin{align}
(\pro (h+\sol\cdot\nabla \sol-f),v)&=0\quad \forall v\in \mbf{\hat{L}}^3(\Omega)\Leftrightarrow\nonumber\\
(h+\pro (\sol\cdot\nabla \sol-f),v)&=0\quad \forall v\in \mbf{\hat{L}}^3(\Omega)\Leftrightarrow\nonumber\\
-\pro (\sol\cdot\nabla \sol-f)&=h\, .
\end{align}
We should now prove that the operator $\C:\mbf{\hat{L}}^{\frac{3}{2}}(\Omega)\to\mbf{L}^{3}(\Omega)$ defined by 
$$\C(h)=-\pro (\sol(g,h)\cdot\nabla \sol(g,h)-f)$$ verifies the contraction property.

Let $h_1,\, h_2\in B_{\delta}$, where $B_{\delta}\subset\mbf{\hat{L}}^{\frac{3}{2}}(\Omega)$ is a given ball with respect to the $\mbf{\hat{L}}^{\frac{3}{2}}(\Omega)$ metrics. Then, using H\"older's inequality together with Poincar\'e's inequality, we get 
{\small \begin{align}
&\|\C(h_1)-\C(h_2)\|_{\mbf{\hat{L}}^{\frac{3}{2}}(\Omega)}=\nonumber\\
&\|\pro (\sol(g,h_1)\cdot\nabla \sol(g,h_1)- \sol(g,h_2)\cdot\nabla \sol(g,h_2))\|_{\mbf{\hat{L}}^{\frac{3}{2}}(\Omega)}=\nonumber\\
&\|\sol(g,h_1)\cdot\nabla \sol(g,h_1)- \sol(g,h_2)\cdot\nabla \sol(g,h_2)\|_{\mbf{{L}}^{\frac{3}{2}}(\Omega)}\leq\nonumber\\
&\|\sol(g,h_1)\cdot\nabla \sol(g,h_1)-\sol(g,h_2)\cdot\nabla \sol(g,h_1)\|_{\mbf{{L}}^{\frac{3}{2}}(\Omega)}\nonumber\\ 
&+\|\sol(g,h_2)\cdot\nabla \sol(g,h_1)- \sol(g,h_2)\cdot\nabla \sol(g,h_2)\|_{\mbf{{L}}^{\frac{3}{2}}(\Omega)}=\nonumber\\
&\|\sol(0,h_1-h_2)\cdot\nabla \sol(g,h_1)\|_{\mbf{{L}}^{\frac{3}{2}}(\Omega)}+\|\sol(g,h_2)\cdot\nabla \sol(0,h_1-h_2)\|_{\mbf{{L}}^{\frac{3}{2}}(\Omega)}\leq\nonumber\\
&\|\sol(0,h_1-h_2)\|_{\mbf{{L}}^6(\Omega)}\|\nabla \sol(g,h_1)\|_{\mbf{{L}}^2(\Omega)}+\|\sol(g,h_2)\|_{\mbf{{L}}^6(\Omega)}\|\nabla \sol(0,h_1-h_2)\|_{\mbf{{L}}^2(\Omega)}\leq\nonumber\\
&c_1(\|\sol(0,h_1-h_2)\|_{\mbf{{H}}^1(\Omega)}\|\nabla \sol(g,h_1)\|_{\mbf{{L}}^2(\Omega)}+\|\sol(g,h_2)\|_{\mbf{{H}}^1(\Omega)}\|\nabla \sol(0,h_1-h_2)\|_{\mbf{{L}}^2(\Omega)})\leq\nonumber\\
&c_2\|\nabla\sol(0,h_1-h_2)\|_{\mbf{{L}}^2(\Omega)}\left(\|\nabla \sol(g,h_1)\|_{\mbf{{L}}^2(\Omega)}+\|\sol(g,h_2)\|_{\mbf{{H}}^1(\Omega)}\right).\label{major}
\end{align}}
Using Lemma~\ref{stokesestimate} and the continuous embedding $H_0^1(\Gamma_{in})\subset {H}_{00}^{\frac{1}{2}}(\Gamma_{in})$, we can see that 
{\small\begin{align}
(\ref{major})\,&\leq c_3\left(\|h_1-h_2\|_{\mbf{{L}}^{\frac{3}{2}}(\Omega)}^2\right)^{\frac{1}{2}}\times\nonumber\\ &\left[\left(\|h_1\|_{\mbf{{L}}^{\frac{3}{2}}(\Omega)}^2+\|g\|_{\mbf{H}_{00}^{\frac{1}{2}}(\Gamma_{in})}^2 \right)^{\frac{1}{2}}+\left(\|h_2\|_{\mbf{{L}}^{\frac{3}{2}}(\Omega)}^2+\|g\|_{\mbf{H}_{00}^{\frac{1}{2}}(\Gamma_{in})}^2 \right)^{\frac{1}{2}}  \right] \nonumber\\
&\leq c_4\|h_1-h_2\|_{\mbf{{L}}^{\frac{3}{2}}(\Omega)}\left[\|h_1\|_{\mbf{{L}}^{\frac{3}{2}}(\Omega)}+\|h_2\|_{\mbf{{L}}^{\frac{3}{2}}(\Omega)}+\|g\|_{\mbf{H}_{0}^{{1}}(\Gamma_{in})}  \right] \nonumber\\
&\leq \bar{c}\|h_1-h_2\|_{\mbf{{L}}^{\frac{3}{2}}(\Omega)},\nonumber
\end{align}}
where $\bar{c}$ depends on $\|h_1\|_{\mbf{{L}}^{\frac{3}{2}}(\Omega)}$, $\|h_2\|_{\mbf{{L}}^{\frac{3}{2}}(\Omega)}$ and $\|g\|_{\mbf{H}_{0}^{{1}}(\Gamma_{in})}$. But since $h_1,\, h_2\in B_{\delta}$, we can choose $\delta$ and $\rho$ small enough so that $\bar{c}<1$. Therefore $\sol$ maps $B_{\delta}$ into itself and hence it has a fixed point $\bar{h}$. Since $\bar{c}$ is strictly smaller than $1$, it is easy to see that such fixed point  is unique. 
As for the estimate (\ref{navierstokesestimate}),  let us notice that the fixed point can be obtained as the limit of a sequence $(h_k)$ verifying 
$$h_1=\C(0),\,h_2=\C(h_1),\ldots,\, h_k=\C(h_{k-1}),...$$ Since we have $h_k=\sum_{i=1}^k(h_i-h_{i-1})=\sum_{i=1}^k[\C(h_{i-1})-\C(h_{i-2})]$
then, in virtue of Lemma~\ref{convective} and Lemma~\ref{stokesestimate}, we have
\begin{align}
\|\bar{h}\|_{\mbf{{L}}^{\frac{3}{2}}(\Omega)}=&\|\lim_{k\to\infty}h_k\|_{\mbf{{L}}^{\frac{3}{2}}(\Omega)}\leq\lim_{k\to\infty}\sum_{i=1}^k\|h_k-h_{k-1}\|_{\mbf{{L}}^{\frac{3}{2}}(\Omega)}\nonumber\\
\leq&\sum_{i=1}^{\infty}\bar{c}^{\ i-1}\|\C(0)\|_{\mbf{{L}}^{\frac{3}{2}}(\Omega)}= \frac{\bar{c}}{1-\bar{c}}\|\sol(g,0)\cdot \nabla \sol(g,0)-f\|_{\mbf{{L}}^{\frac{3}{2}}(\Omega)}\nonumber\\
\leq& c_5(\|\sol(g,0)\|_{\mbf{{H}}^1(\Omega)}^2+\|f\|_{\mbf{{L}}^{\frac{3}{2}}(\Omega)})\leq c_6(\|g\|_{\mbf{{H}}_{00}^{\frac{1}{2}}(\Gamma_{in})}^2+\|f\|_{\mbf{{L}}^{\frac{3}{2}}(\Omega)})\nonumber.\\
\end{align}
Consequently, the solution $u=\sol(\bar{h},g)$ of system (\ref{weaknavierstokes}) is bounded by
\begin{align}
\|u\|_{\mbf{{H}}^{1}(\Omega)}^2&=\|\sol(g,\bar{h})\|_{\mbf{{H}}^{1}(\Omega)}^2\leq c_6\left( \|g\|_{\mbf{{H}}_{00}^{\frac{1}{2}}(\Gamma_{in})}^2+\|\bar{h}\|_{\mbf{{L}}^{\frac{3}{2}}(\Omega)}^2\right)\nonumber.\\
&\leq c_7\left( \|g\|_{\mbf{{H}}_{00}^{\frac{1}{2}}(\Gamma_{in})}^2+\|g\|_{\mbf{{H}}_{00}^{\frac{1}{2}}(\Gamma_{in})}^4+\|f\|_{\mbf{{L}}^{\frac{3}{2}}(\Omega)}^2\right)\nonumber.\\
&\leq c_8\left( \|g\|_{\mbf{{H}}_{0}^{1}(\Gamma_{in})}^2+\|g\|_{\mbf{{H}}_{0}^{1}(\Gamma_{in})}^4+\|f\|_{\mbf{{L}}^{\frac{3}{2}}(\Omega)}^2\right)\nonumber.\\
&= \alpha\left( \|g\|_{\mbf{{H}}_{0}^{1}(\Gamma_{in})}^2\right)+\|f\|_{\mbf{{L}}^{\frac{3}{2}}(\Omega)}^2\nonumber.
\end{align}
\end{proof}
\begin{remark}\label{remarklessregularity}
In the proof of the previous theorem the fact that $g\in \mbf{H}_0^1(\Gamma_{in})$ is not essential, and we could alternatively suppose that $g\in\mbf{H}_{00}^{\frac{1}{2}}(\Gamma_{in})$ verifies $\|g\|_{\mbf{H}_{00}^{\frac{1}{2}}(\Gamma_{in}) }\leq \rho$. In this case the proof could follow in the same way, but we would get the estimate
\begin{equation}\label{estimatelessregularity}\|u\|_{\mbf{H}^1(\Omega)}^2 \leq \alpha \left(\|g\|_{\mbf{H}_{00}^{\frac{1}{2}}(\Gamma_{in})}^2\right)+\|f\|_{\mbf{{L}}^{\frac{3}{2}}(\Omega)}^2,
\end{equation}
instead of (\ref{navierstokesestimate}).
\end{remark}
\section{Existence results for the optimal control problem}
Consider the admissible control set 
$${\cal U}=\left\{g\in  H_0^1(\Gamma_{in})
        \mid \,\|g\|_{H_0^1(\Gamma)}\leq \rho\right\},$$ where $\rho$ is defined as in Theorem~\ref{navierstokesexistence}.
We can define the weak version of problem $(P)$ as follows: we look for $g\in \cal U$ such that $J(u,g)$ is minimized, where $u$ is the unique weak solution of (\ref{weaknavierstokes}) corresponding to $g$.
\begin{remark}
Note that ${\cal U}$ is just an example of an admissible set, within the abstract set
$${\cal U}_0=\left\{g\in  H_0^1(\Gamma_{in}):\text{such that (\ref{weaknavierstokes}) has a unique solution}\right\}.$$
\end{remark}
We can prove the following existence result:
\begin{theorem}\label{existencecontrolfull}
Assume that $\Omega_{part}=\Omega$, $\rho$ is as described above and $\beta_2,\,\beta_3\neq 0$. Then $(P)$ has an optimal solution $(u,g)\in \mbf{V}_{wall}\times \cal{U}$ in the weak sense.
\end{theorem}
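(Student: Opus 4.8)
The plan is to proceed by the direct method of the calculus of variations. Since $0\in{\cal U}$ and the corresponding state exists by Theorem~\ref{navierstokesexistence}, the admissible set is nonempty and $J\geq 0$, so $m:=\inf J$ is finite and there is a minimizing sequence $(u_n,g_n)$ with $g_n\in{\cal U}$ and $u_n\in\mbf{V}_{wall}$ the unique weak solution of (\ref{weaknavierstokes}) associated to $g_n$. First I would extract weak limits. The controls $g_n$ are bounded in $\mbf{H}_0^1(\Gamma_{in})$ by $\rho$, and this space is reflexive, so along a subsequence $g_n\rightharpoonup\bar g$ in $\mbf{H}_0^1(\Gamma_{in})$; since ${\cal U}$ is a closed convex ball it is weakly closed, whence $\bar g\in{\cal U}$. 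By the a priori estimate (\ref{navierstokesestimate}) the states $u_n$ are bounded in $\mbf{H}^1(\Omega)$, so along a further subsequence $u_n\rightharpoonup\bar u$ in $\mbf{H}^1(\Omega)$, and by the compact embedding of Lemma~\ref{sobolev}.i) we also have $u_n\to\bar u$ strongly in $\mbf{L}^q(\Omega)$ for every $q<6$, in particular in $\mbf{L}^2(\Omega)$ and $\mbf{L}^4(\Omega)$. The constraints defining $\mbf{V}_{wall}$ (divergence free, vanishing trace on $\Gamma_{wall}$) are weakly closed, so $\bar u\in\mbf{V}_{wall}$.

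Next I would identify $\bar u$ as the state associated to $\bar g$ by passing to the limit in the weak formulation (\ref{weaknavierstokes}) tested against a fixed $v\in\mbf{V}_D$. The viscous term $\nu(\nabla u_n,\nabla v)$ and the right-hand side $(f,v)$ pass to the limit immediately by weak $\mbf{H}^1$ convergence. The main obstacle is the convective term $\int_\Omega (u_n\cdot\nabla u_n)\,v\,dx$, where weak convergence alone is insufficient. The key step is to write it as $\int_\Omega u_{n,j}\,v_i\,\partial_j u_{n,i}\,dx$ and observe that $u_{n,j}v_i\to\bar u_j v_i$ strongly in $\mbf{L}^2(\Omega)$ (strong $\mbf{L}^4$ convergence of $u_n$ times the fixed factor $v$), while $\partial_j u_{n,i}\rightharpoonup\partial_j\bar u_i$ weakly in $\mbf{L}^2(\Omega)$; the pairing of a strongly and a weakly convergent $\mbf{L}^2$ sequence converges to the pairing of the limits, giving $\int_\Omega(\bar u\cdot\nabla\bar u)\,v\,dx$. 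For the boundary condition, the trace operator $\gamma_{\Gamma_{in}}$ is bounded and hence weakly continuous, so $\gamma_{\Gamma_{in}}u_n\rightharpoonup\gamma_{\Gamma_{in}}\bar u$ while $g_n\rightharpoonup\bar g$ in $\mbf{L}^2(\Gamma_{in})$; uniqueness of weak limits yields $\gamma_{\Gamma_{in}}\bar u=\bar g$. Thus $\bar u$ is a weak solution of (\ref{weaknavierstokes}) for $\bar g$, and since $\bar g\in{\cal U}$ Theorem~\ref{navierstokesexistence} guarantees this solution is the unique one, so $(\bar u,\bar g)$ is admissible for $(P)$.

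Finally I would invoke lower semicontinuity of $J$. With $\Omega_{part}=\Omega$, the tracking term $\beta_1\int_\Omega|u_n-u_d|^2\,dx$ in fact converges to $\beta_1\int_\Omega|\bar u-u_d|^2\,dx$ by the strong $\mbf{L}^2(\Omega)$ convergence of $u_n$. The two boundary terms, weighted by $\beta_2$ and $\beta_3$, are squared seminorms of the weak topology of $\mbf{H}_0^1(\Gamma_{in})$; being convex and continuous they are weakly lower semicontinuous, so $\int_{\Gamma_{in}}|\bar g|^2\,ds\le\liminf_n\int_{\Gamma_{in}}|g_n|^2\,ds$ and likewise for the $\nabla_s$ term. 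Combining these, $J(\bar u,\bar g)\le\liminf_n J(u_n,g_n)=m$, and since $(\bar u,\bar g)$ is admissible we conclude $J(\bar u,\bar g)=m$, that is, $(\bar u,\bar g)\in\mbf{V}_{wall}\times{\cal U}$ is an optimal solution. The only genuinely delicate point is the treatment of the nonlinear convective term, which is precisely where the compact Sobolev embedding of Lemma~\ref{sobolev}.i) is indispensable.
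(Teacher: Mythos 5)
Your proposal is correct and follows essentially the same route as the paper: the direct method with a minimizing sequence, weak compactness of controls and states from the bound $\rho$ and estimate (\ref{navierstokesestimate}), identification of $\gamma_{\Gamma_{in}}\bar u=\bar g$ via uniqueness of weak limits in $\mbf{L}^2(\Gamma_{in})$, weak continuity of the convective term, and weak lower semicontinuity of $J$. The only differences are expository: you prove the weak continuity of the convective term explicitly via the compact embedding (where the paper simply cites Girault--Raviart) and note the weak closedness of $\cal U$ and the strong $\mbf{L}^2$ convergence of the tracking term, details the paper subsumes under convexity and continuity of $J$.
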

\begin{proof}
First see that for  $g=0$  there is a corresponding unique solution $u_0$  to (\ref{weaknavierstokes}) so that $\mbf{V}_{wall}\times \cal{U}$ is nonempty. This implies  that $0\leq J\leq +\infty$. 

Let $( u_k,g_k)_k\subset \mbf{V}_{wall}\times \cal{U} $ be a minimizing sequence, that is, such that  
$$J(u_k,g_k)\rightarrow I,\text{ the infimum, when }k\to+\infty.$$ Since ${\cal U}\subset H_0^1(\Gamma_{in})$ is bounded, there exists a subsequence of $(g_k)_k$ which converges weakly to a certain $\bar{g}\in H_0^1(\Gamma_{in})$. Due to (\ref{navierstokesestimate}) we have 
 $$\|u_k\|_{\mbf{H}^1(\Omega)}^2 \leq \alpha \left(\|g_k\|_{\mbf{H}_{0}^1(\Gamma_{in}) }^2\right)+\|f\|_{\mbf{{L}}^{\frac{3}{2}}(\Omega)}^2,\quad\forall k,$$
and therefore there exists $\bar{u}$ such that $u_k\rightarrow \bar{u}$ weakly in $\mbf{H}^1(\Omega)$. Indeed, we have $\bar{u}\in \mbf{V}_{wall}$, as both  the divergence operator and the trace operator $\gamma_{\Gamma_{wall}}:H^1(\Omega)\to H^{\frac{1}{2}}(\Gamma_{wall})$ are bounded linear operators. Also, as $\gamma_{\Gamma_{in}}u_k\rightarrow \gamma_{\Gamma_{in}}\bar{u}$, weakly in $\mbf{H}^{\frac{1}{2}}(\Gamma_{in})$, we have that  $\gamma_{\Gamma_{in}}u_k=g_k$ converges weakly in $\mbf{L}^2(\Gamma_{in})$, both to $\gamma_{\Gamma_{in}}\bar{u }$ and $\bar{g}$. Thus,  we must have $\gamma_{\Gamma_{in}}\bar{u}=\bar{g}$. Finally, since the convective term in (\ref{weaknavierstokes}) is weakly continuous in $\mbf{H}^1(\Omega)$ (see \cite{GR86} p.286) we conclude that $\bar{u}$ is the solution corresponding to $\bar{g}$. Due to the fact that the functional $J$ is both convex and continuous, and therefore strong lower semi-continuous (l.s.c.), it is also l.s.c. with respect to the weak topology (\cite{B83} Remark III.8.6). Consequently, 

$$I=\lim_{k} J(u_k,g_k)\geq\liminf_{k}J( u_k, g_k)\geq J(\bar{u},\bar{g})\geq I,$$
and we conclude that  $( \bar{u}, \bar{g})$ is a an optimal solution for $(P)$.
\end{proof}
\begin{remark}
The fact that we assume ${\cal U_0}$ bounded in $ H_0^1(\Gamma_{in})$ is a very strong assumption which allows us to prove the result even either if $\beta_2=0$ or $\beta_3=0$. In this latter case, the l.s.c. property of $J$ should be verified with respect to $H^{\frac{1}{2}}(\Gamma_{in})$ rather than $ H_0^1(\Gamma_{in})$. 
\end{remark}

\begin{remark} We can also choose an admissible set for the controls that is not necessarily bounded. This is the case when  ${\cal U}={\cal U}_0$.
Then, if $\beta_3\neq 0$, from the fact that for a minimizing sequence $(g_k)_k$ we have $$\|g_k\|_{H_0^1(\Gamma_{in})}\leq J(u_k,g_k)\leq +\infty,$$ we can still extract a weakly convergent sequence in   $H_0^1(\Gamma_{in})$, so that the proof would follow as above. If $\beta_3=0$, in view of  the properties of $ H_0^1(\Gamma_{in})$ (see for instance \cite{GHS91}), we would get $$\|g_k\|_{H_0^1(\Gamma_{in})}\leq\|g_k\|_{L^2(\Gamma_{in})}\leq J(u_k,g_k)\leq +\infty,$$
and the proof could be attained similarly as above.

\end{remark}
We will now consider another choice for $\Omega_{part}$ more connected to the medical applications we have in mind. Let $\Omega$ be a domain representing a blood vessel like in Figure~\ref{figdomain}. Consider $(\Omega_{p_i})_i$ to be a monotone sequence of subsets of $\Omega$, such that 
\begin{equation}\label{subsets}\Omega_{p_1}\subset\Omega_{p_2}...\subset\Omega_{p_m}\subset \Omega.
\end{equation}
In addition, assume also that for all $i\in\{1,...,m\}$, we have $$\partial\Omega_{p_i}=\Gamma_{in_i}\cup\Gamma_{wall_i}\cup\Gamma_{out_i}$$
where $\Gamma_{out_i}$, $i\in\{1,...,m\}$,  are disjoint surfaces corresponding to cross sections of $\Omega$, 
$\Gamma_{in_i}=\Gamma_{in}$,{ and } $\Gamma_{wall_i}=\Gamma_{wall}\cap\overline{\Omega}_{p_i}\neq\emptyset.$   
Note that the construction of each $\Omega_{p_i}$ in this way  ensures that (\ref{subsets}) is verified, and that  each $\Omega_{p_i}$ itself represents a part of  the vessel $\Omega$.

Now consider $\Omega_{part}=\cup_{i=1}^ms_i$ where $s_i=\Gamma_{out_i}$, for all $i\in\{1,...,m\}$. An example of such a situation is represented in Figure~\ref{figdata}. We can still establish the existence of solution in this case.

\begin{theorem}\label{existencecontrolsections}
Assume that $\Omega_{part}$ in $J$ is given by $\Omega_{part}=\cup_{i=1}^m{s_i}$, as described above. Then there is an optimal solution to problem $(P)$.
\end{theorem}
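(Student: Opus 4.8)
The plan is to follow the same direct-method scheme used in the proof of Theorem~\ref{existencecontrolfull}, modifying only the lower semicontinuity argument to account for the fact that the fidelity term of $J$ is now an integral over the cross sections $s_i=\Gamma_{out_i}$ rather than over a solid subdomain. First I would check that the set of admissible pairs is nonempty: taking $g=0\in\cal U$, Theorem~\ref{navierstokesexistence} provides a corresponding unique weak solution $u_0\in\mbf{V}_{wall}$, so that $0\le I\le J(u_0,0)<+\infty$, where $I$ denotes the infimum of $J$ over admissible pairs.

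Next I would fix a minimizing sequence $(u_k,g_k)_k\subset\mbf{V}_{wall}\times\cal U$. Exactly as before, boundedness of $\cal U$ in $\mbf{H}_0^1(\Gamma_{in})$ yields a subsequence with $g_k\rightharpoonup\bar g$ weakly in $\mbf{H}_0^1(\Gamma_{in})$, while the a priori bound \eqref{navierstokesestimate} together with $\|g_k\|_{\mbf{H}_0^1(\Gamma_{in})}\le\rho$ yields $u_k\rightharpoonup\bar u$ weakly in $\mbf{H}^1(\Omega)$ for some $\bar u\in\mbf{V}_{wall}$. The identifications $\gamma_{\Gamma_{in}}\bar u=\bar g$ and the fact that $\bar u$ is the weak solution associated to $\bar g$ are obtained verbatim from Theorem~\ref{existencecontrolfull}, using the weak continuity of the convective term and the continuity of the divergence and trace operators. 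Hence $(\bar u,\bar g)\in\mbf{V}_{wall}\times\cal U$ is admissible.

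The only genuinely new ingredient is the behaviour of the fidelity term $\beta_1\int_{\Omega_{part}}|u-u_d|^2\,ds=\beta_1\sum_{i=1}^m\int_{s_i}|\gamma_{s_i}u-u_d|^2\,ds$ along the minimizing sequence. For each cross section $s_i=\Gamma_{out_i}$, the restriction $\mbf{H}^1(\Omega)\to\mbf{H}^1(\Omega_{p_i})$ composed with the trace $\gamma_{\Gamma_{out_i}}:\mbf{H}^1(\Omega_{p_i})\to\mbf{H}^{\frac12}(s_i)$ is a bounded linear map, and since $s_i$ is a smooth hypersurface the embedding $\mbf{H}^{\frac12}(s_i)\hookrightarrow\mbf{L}^2(s_i)$ is compact. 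Consequently the trace map $\mbf{H}^1(\Omega)\to\mbf{L}^2(s_i)$ is compact, so $u_k\rightharpoonup\bar u$ in $\mbf{H}^1(\Omega)$ forces $\gamma_{s_i}u_k\to\gamma_{s_i}\bar u$ strongly in $\mbf{L}^2(s_i)$ for every $i$. This in fact gives convergence (not merely l.s.c.) of the fidelity term, while the two control terms, being convex and continuous on $\mbf{H}_0^1(\Gamma_{in})$, are weakly l.s.c. Adding these contributions yields $J(\bar u,\bar g)\le\liminf_k J(u_k,g_k)=I$, and since $(\bar u,\bar g)$ is admissible we conclude $J(\bar u,\bar g)=I$, so $(\bar u,\bar g)$ solves $(P)$.

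I expect the main obstacle to lie precisely in justifying the compactness of the trace onto the \emph{interior} cross sections $s_i$: one must verify that each $\Gamma_{out_i}$ is regular enough (a Lipschitz or $C^1$ hypersurface cutting $\Omega$) for the interior trace $\gamma_{s_i}$ to be well defined and bounded from $\mbf{H}^1(\Omega)$ into $\mbf{H}^{\frac12}(s_i)$, and that the compact embedding $\mbf{H}^{\frac12}(s_i)\hookrightarrow\mbf{L}^2(s_i)$ indeed holds on such a surface. One must also record that the measured data $u_d$ is prescribed as an element of $\mbf{L}^2(s_i)$ on each section, so that the fidelity term is finite along the sequence and the convergence of traces transfers directly to convergence of $\int_{s_i}|\gamma_{s_i}u_k-u_d|^2\,ds$.
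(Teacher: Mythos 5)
Your proof is correct, but at the decisive step it takes a genuinely different route from the paper. The paper never uses compactness: it shows that the fidelity term $\int_{\Omega_{part}}|u-u_d|^2\,ds$ is \emph{strongly} continuous on $\mbf{H}^1(\Omega)$ (using only the boundedness of the trace operators $\gamma_{s_i}:\mbf{H}^1(\Omega_{p_i})\to\mbf{H}^{\frac12}(s_i)$ and the triangle inequality) and \emph{convex} (by a direct computation), and then invokes the standard fact, already used in Theorem~\ref{existencecontrolfull}, that a convex strongly l.s.c.\ functional is weakly l.s.c.; the rest of the argument is inherited verbatim from Theorem~\ref{existencecontrolfull}, exactly as in your first two paragraphs. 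You instead prove that the composite map $\mbf{H}^1(\Omega)\to\mbf{L}^2(s_i)$ is compact, via the compact embedding $\mbf{H}^{\frac12}(s_i)\hookrightarrow\mbf{L}^2(s_i)$, so that weak convergence $u_k\rightharpoonup\bar u$ already forces strong convergence of the traces and hence genuine convergence (not just lower semicontinuity) of the fidelity term. Your route buys a stronger conclusion along the minimizing sequence and would survive a nonconvex fidelity term, since you never use convexity of the trace part; its price is precisely the obstacle you flag, namely verifying that each interior cross section $\Gamma_{out_i}$ is a Lipschitz (or smoother) hypersurface so that the trace is bounded and the embedding $\mbf{H}^{\frac12}(s_i)\hookrightarrow\mbf{L}^2(s_i)$ is compact --- a regularity hypothesis the paper's argument does not need, since boundedness of $\gamma_{s_i}$ alone suffices there. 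Both arguments are complete once their respective hypotheses on the $s_i$ are granted; your identification of the weak limit as the state associated to $\bar g$, and the splitting $\liminf_k J(u_k,g_k)\geq J(\bar u,\bar g)$ combining the convergent fidelity term with the weakly l.s.c.\ control terms, are sound.
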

\begin{proof}
Let $\gamma_{s_i}:\mbf{H}^1(\Omega_{p_i})\to\mbf{H}^{\frac{1}{2}}(s_{i})$ be the family of linear, and bounded, trace operators defining the boundary values, over each surface $s_i$, for functions defined in $\Omega_{p_i}$. 
To prove that $J$ is weakly l.s.c, we need to see that it verifies the continuity and convexity properties. Let $u_k\rightarrow u$ in $\mbf{H}^1(\Omega)$ and consider $\gamma_{s_i}u_d=g_i$ to be the values of the known data over each $s_i$. 
In this case $$\left|\int_{\Omega_{part}}(u_k-u_d)^2-(u-u_d)^2\,ds\right|$$ is, in fact,
\begin{align}
&\left|\sum_{i=1}^m\left[\|\gamma_{s_i} u_k-g_i\|_{L^2(s_i)}^2 - \|\gamma_{s_i}u-g_i\|_{L^2(s_i)}^2\right]\right|\leq\nonumber\\
&\left|\sum_{i=1}^m\left[(\|\gamma_{s_i} u_k-\gamma_{s_i}u\|_{L^2(s_i)}+\|\gamma_{s_i}u-g_i\|_{L^2(s_i)}))^2 - \|\gamma_{s_i}u-g_i\|_{L^2(s_i)}^2\right]\right|.\nonumber
\end{align}
Due to the boundness of each $\gamma_{s_i}$ we have that the last term can be bounded from above by
\begin{align}
&\left|\sum_{i=1}^m\left[(c_i\| u_k-u\|_{\mbf{H}^1(\Omega_{p_i})}+\|\gamma_{s_i}u-g_i\|_{L^2(s_i)}))^2 - \|\gamma_{s_i}u-g_i\|_{L^2(s_i)}^2\right]\right|\leq\nonumber\\
&\left|\sum_{i=1}^m\left[(c_i\| u_k-u\|_{\mbf{H}^1(\Omega)}+\|\gamma_{s_i}u-g_i\|_{L^2(s_i)}))^2 - \|\gamma_{s_i}u-g_i\|_{L^2(s_i)}^2\right]\right|,\nonumber\\
\end{align}
which goes to zero when $k\rightarrow\infty$.

The convexity follows directly from the fact that 
\begin{align}
&\int_{\Omega_{part}}(\frac{u_1+u_2}{2}-u_d)^2\,ds=\sum_{i=1}^m\frac{1}{4}\int_{s_{i}}(\gamma_{s_i}u_1-g_i+\gamma_{s_i}u_2-g_i)^2\,ds\nonumber\\
&\leq\sum_{i=1}^m\frac{1}{4}\int_{s_{i}}2^1[(\gamma_{s_i}u_1-g_i)^2+(\gamma_{s_i}u_2-g_i)^2]\,ds\nonumber\\
&\leq\frac{1}{2}\int_{\Omega_{part}}(u_1-u_d)^2\,ds+\frac{1}{2}\int_{\Omega_{part}}(u_2-u_d)^2\,ds\nonumber.
\end{align}
Therefore $J$ is weakly l.s.c.. The rest of the proof follows as in Theorem~\ref{existencecontrolfull}.
\end{proof}
Lastly, another case that can also be interesting from the applications point of view.

\begin{theorem}\label{existencecontrolpartial}If we consider now $\Omega_{p_i}$ as a family of disjoint subdomains of $\Omega$ and we take $\Omega_{part}=\cup_{i=1}^m\Omega_{p_i}$ in $J$,  then problem $(P)$ also has an optimal solution.
\end{theorem}
\begin{proof}
To prove this statement, we will check, once more, that  $J$ remains convex and strongly continuous.  Concerning the convexity, it follows directly as in Theorem~\ref{existencecontrolsections}. As for the continuity, let $(u_k)_k$ be a convergent sequence to $u$ in $\mbf{H}^1(\Omega)$, then 
\begin{align}
&\left|\int_{\Omega_{part}}(u_k-u_d)^2-(u-u_d)^2\,dx\right|\leq \nonumber\\
&\left|\sum_{i=1}^m\left[(\|u_k-u\|_{L^2(\Omega_{p_i})}+\|u-u_d\|_{L^2(\Omega_{p_i})})^2 - \|u-u_d\|_{L^2(\Omega_{p_i})}^2\right]\right|\leq\nonumber\\
&\left|\sum_{i=1}^m\left[(\|u_k-u\|_{L^2(\Omega)}+\|u-u_d\|_{L^2(\Omega_{p_i})})^2 - \|u-u_d\|_{L^2(\Omega_{p_i})}^2\right]\right|\nonumber
\end{align}
which tends to zero when $k\rightarrow\infty$.
\end{proof}


\end{document}